\newtheorem{thm}{Theorem}[section]
\newtheorem{cor}[thm]{Corollary}
\newtheorem{lem}[thm]{Lemma}
\newtheorem{prop}[thm]{Proposition}
\theoremstyle{mydefinition}
\theoremstyle{myremark}
\newtheorem{rem}[thm]{Remark}
\newtheorem{exa}[thm]{Example}
\newtheorem{Openprob}[thm]{Open problem}
\renewcommand{\P}{{\mathbb{P}}}
\title{Hankel Determinants for Convolution of Power Series: An Extension of Cigler's Results}
\author{Feihu Liu$^{\color{blue} \dag}$, Ying Wang$^{\color{blue} \ddag}$, Yingrui Zhang$^{\color{blue} \S}$, and Zihao Zhang$^{\color{blue} \P}$
\\[2mm]
{\small $^{\color{blue} \dag, \P}$ School of Mathematical Sciences,}\\[-0.8ex]
{\small Capital Normal University, Beijing, 100048, P.R.~China}\\
{\small $^{\color{blue} \ddag}$ School of Mathematics and Statistics,}\\[-0.8ex]
{\small North China University of Water Resources and Electric Power, Zhengzhou, 450045, P.R.~China}\\
{\small $^{\color{blue} \S}$ School of Statistics and Mathematics,}\\[-0.8ex]
{\small Yunnan University of Finance and Economics, Kunming, 650221, P.R.~China}\\
{\small {\color{blue} $^\dag$} Email address: liufeihu7476@163.com}\\
{\small {\color{blue} $^\ddag$} Email address: wangying2019@ncwu.edu.cn}\\
{\small {\color{blue} $^\S$} Email address: zyrzuhe@126.com}\\
{\small {\color{blue} $^\P$} Email address: zihao-zhang@foxmail.com}
}
\date{March 21, 2025}
\begin{document}

\maketitle

\begin{abstract}
Cigler considered certain shifted Hankel determinants of convolution powers of Catalan numbers and conjectured identities for these determinants. Recently, Fulmek gave a bijective proof of Cigler's conjecture. Cigler then provided a computational proof.
We extend Cigler's determinant identities to the convolution of general power series $F(x)$, where $F(x)$ satisfies a certain type of quadratic equation. As an application, we present the Hankel determinant identities of convolution powers of Motzkin numbers.
\end{abstract}

\noindent
\begin{small}
 \emph{Mathematics subject classification}: Primary 05A15; Secondary 11C20, 15A15.
\end{small}

\noindent
\begin{small}
\emph{Keywords}: Hankel determinant; Quadratic equation; Convolution; Catalan number; Motzkin number.
\end{small}

\section{Introduction}
Let $F=(a_0,a_1,a_2,\ldots)$ be a sequence with $a_i\in \mathbb{R}$, and denote by 
\begin{align*}
F(x)=a_0+a_1x+a_2x^2+\cdots \in \mathbb{R}[[x]]
\end{align*}
its generating function. 
Define the \emph{Hankel matrices} of order $n$ of $F(x)$ by 
\begin{align*}
\mathcal{H}_n(F(x))= (a_{i+j})_{0\leq i,j\leq n-1}
=\left ( \begin{matrix}
a_0 & a_1 & \cdots & a_{n-1} \\
a_1 & a_2 & \cdots & a_{n-2}  \\
\vdots & \vdots & \ddots & \vdots \\ 
a_{n-1} & a_{n} & \cdots & a_{2n-2} \\ 
\end{matrix} \right )
\end{align*}
or abbreviated as $\mathcal{H}_n(F)$. 
The \emph{Hankel determinant} of order $n$ of $F(x)$ is defined by $\det \mathcal{H}_n(F(x))$ (or $\det \mathcal{H}_n(F)$). We set $\det \mathcal{H}_0(F)=1$ by definition.

The computation of Hankel determinants is a well-studied topic in combinatorics and number theory. For instance, Hankel determinants for specific sequences have been explored in several works \cite{chien2022hankel,Elouafi,Mu-Wang,Mu-Wang-Yeh,QH-Hou}, among others. Many continued fraction methods for calculating Hankel determinants have been developed and studied. For example, these include the S-continued fraction (or J-continued fraction) \cite{Krattenthaler05, FlajoletDM}, Gessel-Xin's continued fraction method \cite{Gessel-Xin}, Sulanke-Xin's continued fraction method \cite{Sulanke-Xin}, the Hankel continued fraction \cite{HanGuoNiu}, and the shifted periodic continued fraction \cite{Wang-Xin-Zhai}.

The \emph{$k$-th convolution of generating function $F(x)$} is defined by 
$$F(x)^k=\left( \sum_{n\geq 0}a_n x^n\right)^k=\sum_{n\geq 0}a_{k,n}x^n.$$ 
Inspired by Cigler's work \cite{Cigler-2403}, we study the following shifted Hankel determinants of $F(x)^k$:
$$D_{K,M}(N):=\det (a_{K,i+j+M})_{i,j=0}^{N-1},$$
where $M\in \mathbb{Z}$, and $K,N\in \mathbb{N}$, $K,N\geq 1$. We set $D_{K,M}(0)=1$ by definition.

When 
$$F(x)=\sum_{n\geq 0}c_nx^n=\frac{1-\sqrt{1-4x}}{2x}$$
is the generating function of the \emph{Catalan numbers} $c_n=\frac{1}{n+1}\binom{2n}{n}$, Cigler \cite{Cigler-2306,Cigler-2308} conjectured the identities of the Hankel determinants below (Theorem \ref{Cigler-Catalan}).

\begin{thm}{\em (Cigler's Conjecture)}\label{Cigler-Catalan}
Let $F(x)$ be the generating function of Catalan numbers. Let $k$ be a positive integer and $m\in \mathbb{N}$.
Then for $K=2k$ we have the even identities 
\begin{align*}
&D_{2k,1-k-m}(N)=0\ \ \ \text{for}\ \ \ N=1,2,\ldots,m+k-1,
\\& D_{2k,1-k-m}(n+m+k)=(-1)^{\binom{m+k}{2}} D_{2k,1-k+m}(n)\ \ \ \ \text{for all}\ n\in \mathbb{N}.
\end{align*}
For $K=2k-1$ we have the odd identities 
\begin{align*}
&D_{2k-1,2-k-m}(N)=0\ \ \ \text{for}\ \ \ N=1,2,\ldots,m+k-2,
\\& D_{2k-1,2-k-m}(n+m+k-1)=(-1)^{\binom{m+k-1}{2}} D_{2k-1,1-k+m}(n)\ \ \ \ \text{for all}\ n\in \mathbb{N}.
\end{align*}
\end{thm}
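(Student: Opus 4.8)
The plan is to prove these Hankel determinant identities for Catalan numbers as a special case of a more general result about power series $F(x)$ satisfying a quadratic equation. Since the Catalan generating function satisfies $xF(x)^2 - F(x) + 1 = 0$, equivalently $F(x) = 1 + xF(x)^2$, I would first establish the general framework and then specialize. The key structural fact is that $F(x)^k$ can be related to $F(x)$ itself through the quadratic relation, allowing one to express the coefficients $a_{k,n}$ in terms of a tractable recurrence or, better, to find an explicit continued fraction (S-fraction or J-fraction) for $F(x)^k$.

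First I would seek a continued fraction expansion for the shifted series whose coefficients are $a_{K,i+j+M}$. The standard approach is to use the theory of orthogonal polynomials: a Hankel determinant $\det(a_{i+j})_{i,j=0}^{N-1}$ evaluates as a product $\prod \lambda_j$ of the weights appearing in the J-fraction
\begin{align*}
\sum_{n\geq 0} a_n x^n = \cfrac{1}{1 - b_0 x - \cfrac{\lambda_1 x^2}{1 - b_1 x - \cfrac{\lambda_2 x^2}{1-\cdots}}}.
\end{align*}
For Catalan convolution powers the relevant weights are known to be essentially constant (all $\lambda_j=1$ after a point), which is what produces the clean vanishing and reflection identities. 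I would compute these weights for $F(x)^k$ using the quadratic equation, most likely via the substitution $x \mapsto x/(\text{something involving } F)$ that linearizes the recursion, or by invoking a Sulanke--Xin or Gessel--Xin type transformation that converts the shifted determinant into an unshifted one of smaller size.

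The reflection identity $D_{K,1-k-m}(n+m+k) = (-1)^{\binom{m+k}{2}} D_{K,1-k+m}(n)$ is the heart of the matter, and I expect it to follow from a symmetry in the underlying matrix structure. The plan is to exploit a relationship between negative and positive shifts: the coefficients $a_{K,j}$ for the shifted convolution, when indexed through the quadratic relation, satisfy a reflection symmetry that manifests as an equivalence of Hankel matrices up to a unipotent (or signed) transformation whose determinant contributes the sign $(-1)^{\binom{m+k}{2}}$. Concretely, I would look for matrices $U$ (unitriangular) such that $\mathcal{H}_{n+m+k}$ of the negatively shifted series factors as a block product exposing an $\mathcal{H}_n$ block of the positively shifted series, with the remaining diagonal block being anti-triangular and thus contributing the binomial-coefficient sign. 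The vanishing statements $D_{K,1-k-m}(N)=0$ for small $N$ should then emerge automatically: the negative shift forces the top-left corner of the Hankel matrix to consist of coefficients $a_{K,j}$ with $j<0$, which are zero, producing a block of zeros large enough to kill the determinant for $N$ below the threshold.

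The main obstacle will be pinning down the exact algebraic identity linking the negatively and positively shifted coefficient arrays and verifying that the transformation matrix is unitriangular with precisely the claimed sign. The parity split into even ($K=2k$) and odd ($K=2k-1$) cases suggests the quadratic equation behaves differently depending on whether the convolution power is even or odd — likely because $F(x)^{2k} = (F(x)^2)^k$ interacts with the relation $xF^2 = F-1$ more symmetrically than the odd powers do. I would handle the even case first, where the square of $F$ appears naturally, then reduce the odd case to it by factoring out one copy of $F(x)$ and tracking the resulting index shift, which should explain the shift of $m+k-1$ versus $m+k$ and the adjusted binomial sign $\binom{m+k-1}{2}$.
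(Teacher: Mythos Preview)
Your proposal has the right instincts in a few places---the vanishing for small $N$ really does come from a zero block in the upper-left corner, and the sign $(-1)^{\binom{m+k}{2}}$ really does come from an anti-triangular block---but it is missing the two concrete ingredients that make the argument go through, and the continued-fraction route you outline is not the one the paper takes.

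The paper's proof rests on two facts. First, a reciprocal-series lemma (Lemma~2.1, essentially due to Andrews--Wimp and Cigler): if $s(x)=\sum s_n x^n$ with $s_0=1$ and $t(x)=1/s(x)=\sum t_n x^n$, then
\[
\det(s_{i+j-M})_{i,j=0}^{N+M}=(-1)^{N+\binom{M+1}{2}}\det(t_{i+j+M+2})_{i,j=0}^{N-1}.
\]
This is proved by the block decomposition you anticipate: after multiplying rows and columns by $t(x)$ and $t(y)$ one obtains a block-diagonal matrix whose upper block is anti-triangular with $1$'s on the anti-diagonal (giving the binomial sign) and whose lower block is a shifted Hankel matrix of the $t_n$. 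The second fact is an explicit formula for $1/F(x)^K$. From the quadratic equation $1-F+xF^2=0$ one shows (Theorem~2.2) that
\[
\frac{1}{F(x)^K}=L_K(x)-\bigl(xF(x)\bigr)^K,
\]
where $L_K(x)$ is a polynomial of degree $\lfloor K/2\rfloor$ (a Lucas-type polynomial in the quadratic data). Hence for $n>\lfloor K/2\rfloor$ the coefficient $t_n$ of $1/F^K$ equals $-a_{K,\,n-K}$. Feeding this into the reciprocal lemma with $s(x)=F(x)^K$ turns the negatively shifted determinant of $F^K$ into a positively shifted determinant of $F^K$, with exactly the claimed index shift and sign. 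The even/odd split falls out of the degree bound $\deg L_K=\lfloor K/2\rfloor$ versus the shift $K$, not from any separate treatment of $F^{2k}$ versus $F^{2k-1}$.

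What is genuinely missing from your plan is the identification of $1/F(x)^K$ as the right auxiliary object and the identity $1/F^K = L_K(x)-(xF)^K$. Without this, the ``unipotent transformation'' you are looking for has no concrete description, and the J-fraction approach you propose would require computing the continued fraction of each backward-shifted series $x^{k+m-1}F(x)^K$ separately---feasible in individual cases via Sulanke--Xin transformations (as the paper does later for specific Motzkin determinants), but not a path to the uniform identity in $k,m,n$. Your suggestion to handle odd $K$ by factoring out one copy of $F$ and reducing to the even case also does not match what actually happens; both parities are handled by the same reciprocal mechanism, with the index offsets arising purely from the degree of $L_K$.
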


Recently Fulmek \cite{Fulmek-2402} gave a bijective proof of Cigler's conjecture by interpreting determinants as generating functions of nonintersecting lattice paths. Fulmek's proof employs the reflection principle and the Lindstr\"om-Gessel-Viennot lemma \cite{GesselViennot,Lindstrom}. Subsequently, Cigler \cite{Cigler-2403} provided an elegant and short proof. Cigler's proof is based on earlier results of Cigler \cite[Proposition 2.5]{Cigler-1302} and Andrews and Wimp \cite{Andrews-Wimp}.

It is well known that the generating function of Catalan numbers satisfies the quadratic equation: $1-F(x)+xF(x)^2=0$.
When $F(x)$ is a power series on a prime field and satisfies a certain type of quadratic equation, Han \cite{HanGuoNiu} proved that the Hankel continued fraction expansion of $F(x)$ exists and is ultimately periodic. Moreover, the Hankel determinant sequence $(\det \mathcal{H}_n(F(x)))_{n\geq 0}$ is ultimately periodic.

In this paper, we first establish an identity about the $k$-th convolution of $F(x)$, see Theorem \ref{FK-convlution}. Then we generalize Cigler's results, Theorem \ref{Cigler-Catalan} to a general power series $F(x)$, where $F(x)$ satisfies a certain type of quadratic equation, see Theorem \ref{Main-Theorem}. Finally, we apply this result to generating functions of Catalan numbers, Motzkin numbers, etc.

When we study the concrete value of $D_{K,M}(N)$ for the convolution of the generating function of Motzkin numbers, we find that $(D_{K,M}(n))_{n\geq 0}$ is shifted periodically. This inspires us to study the exact formulas of $D_{K,M}(n)$ when $K\in \mathbb{N}$ and $M\in \mathbb{Z}$ are concrete values. As byproducts, we obtain some precise formulas through Sulanke-Xin's quadratic transformation \cite{Sulanke-Xin}. For example, $D_{3,-3}(n)$, $D_{2,-3}(n)$, and so on (see Propositions \ref{Prop-D33-proof} and \ref{Prop-Mozt-123}). More formulas can be obtained using similar techniques.

This paper is organized as follows. In Section 2, we present our main results.
In Section 3, we research the applications of our results on Catalan numbers and Motzkin numbers.
In Section 4, we discuss the exact formulas of $D_{K,M}(n)$ when $K\in \mathbb{N}$ and $M\in \mathbb{Z}$ are concrete values.
Finally, we propose an open problem in Section 5.

\section{Main Results}

For an arbitrary two variables generating function $D(x, y) =\sum_{i,j=0}^{\infty} d_{i,j}x^iy^j$, let
$[D(x, y)]_n$ be the determinant of the $n \times n$ matrix $(d_{i,j})_{0\leq i,j\leq n-1}$. It may be intuitive to
see that the Hankel determinants, $\det \mathcal{H}_n(F)$ can be expressed as
$$\det \mathcal{H}_n(F)= \left[\frac{xF(x)-yF(y)}{x-y}\right]_n.$$
This follows from
$$\frac{xF(x)-yF(y)}{x-y}=\frac{x\sum_{i\geq 0}a_ix^i-y\sum_{i\geq 0}a_iy^i}{x-y}=\sum_{i\geq 0}a_i(x^i+x^{i-1}y+\cdots+y^i).$$

There are three simple rules to transform the determinant $[D(x, y)]_n$ to another determinant, see \cite{Gessel-Xin}.
In this paper, we only use the following \emph{product rules}:
If $u(x)$ is any formal power series with $u(0) =1$, then $u(x)[D(x, y)]_n =[D(x, y)]_n=[u(y)D(x, y)]_n$. This rule corresponds to sequences of elementary row or column operations.

The following lemma appears in \cite[Proposition 2.5]{Cigler-1302}, which is a slightly generalized result of Andrews and Wimp \cite{Andrews-Wimp}. For the sake of completeness, we give another simple proof.

\begin{lem}\label{Cigler-ST}
Let $s(x)=\sum_{n\geq 0}s_nx^n$ with $s_0=1$ and 
$$t(x)=\frac{1}{s(x)}=\sum_{n\geq 0}t_nx^n.$$
Setting $s_n=t_n=0$ for $n<0$. For $M,N\in \mathbb{N}$, we get
$$\det(s_{i+j-M})_{i,j=0}^{N+M}=(-1)^{N+\binom{M+1}{2}}\det(t_{i+j+M+2})_{i,j=0}^{N-1}.$$
\end{lem}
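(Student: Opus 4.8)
The plan is to realize both sides as the coefficient-extraction functional $[\,\cdot\,]_n$ applied to explicit bivariate series, and then to exploit $s(x)t(x)=1$ via the product rule. First I would compute, in the same way as the identity $\det\mathcal{H}_n(F)=\bigl[\tfrac{xF(x)-yF(y)}{x-y}\bigr]_n$ recorded above, that
$$\det(s_{i+j-M})_{i,j=0}^{N+M}=\left[\frac{x^{M+1}s(x)-y^{M+1}s(y)}{x-y}\right]_{N+M+1},$$
which follows from $\sum_{i,j\ge 0}s_{i+j-M}x^iy^j=\sum_{n\ge 0}s_{n-M}\frac{x^{n+1}-y^{n+1}}{x-y}$ together with the identity $\sum_{n\ge 0}s_{n-M}z^{n+1}=z^{M+1}s(z)$.

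Next I would invoke the product rule with the power series $t(x)$ and $t(y)$, which is legitimate since $t(0)=1/s(0)=1$. Because $s(x)t(x)=1$, multiplying the numerator by $t(x)t(y)$ turns $x^{M+1}s(x)-y^{M+1}s(y)$ into $x^{M+1}t(y)-y^{M+1}t(x)$, so that
$$\det(s_{i+j-M})_{i,j=0}^{N+M}=\left[\frac{x^{M+1}t(y)-y^{M+1}t(x)}{x-y}\right]_{N+M+1}.$$

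The crucial step is to read off the coefficient matrix $G=(g_{i,j})_{0\le i,j\le N+M}$ of this last series. Writing $x^{M+1}t(y)-y^{M+1}t(x)=\sum_{a\ge 0}t_a\bigl(x^{M+1}y^a-y^{M+1}x^a\bigr)$ and dividing each summand by $x-y$, I would track the antidiagonal $i+j$ on which each resulting monomial sits. The term $a=M+1$ cancels, the terms with $a\le M$ contribute only to entries with $i,j\le M$, and those with $a\ge M+2$ contribute only to entries with $i,j\ge M+1$. This yields $g_{i,j}=t_{i+j-M}$ for $i,j\le M$, $g_{i,j}=-t_{i+j-M}$ for $i,j\ge M+1$, and $g_{i,j}=0$ otherwise; in particular $G$ is block diagonal with respect to the splitting $\{0,\dots,M\}\sqcup\{M+1,\dots,N+M\}$, so $\det G$ factors as the product of the two diagonal blocks.

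It then remains to evaluate the blocks. The top-left block $(t_{i+j-M})_{i,j=0}^{M}$ vanishes for $i+j<M$ and carries $t_0=1$ on the antidiagonal $i+j=M$; the only nonzero term in its permutation expansion is the reversal $\sigma(i)=M-i$, giving determinant $(-1)^{\binom{M+1}{2}}$. After the reindexing $i\mapsto i+M+1$, $j\mapsto j+M+1$, the bottom-right block is $(-t_{i+j+M+2})_{i,j=0}^{N-1}$ and contributes $(-1)^N\det(t_{i+j+M+2})_{i,j=0}^{N-1}$. Multiplying the two factors gives the claimed sign $(-1)^{N+\binom{M+1}{2}}$. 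I expect the main obstacle to be the third step: confirming that the off-diagonal blocks genuinely vanish and that the two diagonal signs are exactly $(-1)^{\binom{M+1}{2}}$ and $(-1)^N$, which comes down to careful bookkeeping of the index ranges produced by the two families of monomials.
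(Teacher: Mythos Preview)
Your proposal is correct and follows essentially the same route as the paper's proof: both represent the determinant as $\bigl[\frac{x^{M+1}t(y)-y^{M+1}t(x)}{x-y}\bigr]_{N+M+1}$ via the product rule, then recognize the resulting $(N+M+1)\times(N+M+1)$ matrix as block diagonal with the lower antitriangular block $(t_{i+j-M})_{i,j=0}^{M}$ and the block $(-t_{i+j+M+2})_{i,j=0}^{N-1}$. The only cosmetic difference is that the paper isolates the two blocks by splitting $t(x)=t_M(x)+(t(x)-t_M(x))$ with $t_M(x)=\sum_{i=0}^{M}t_ix^i$, whereas you read off $g_{i,j}$ directly from the expansion $\sum_{a\ge 0}t_a(x^{M+1}y^a-y^{M+1}x^a)/(x-y)$; the resulting matrices, signs, and final identity are identical.
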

\begin{proof}
Since $s_0=1$, it follows that $t_0=1$. Let $t_M(x)=1+t_1x+\cdots +t_{M}x^{M}$. Then we have 
\begin{align*}
&\det(s_{i+j-M})_{i,j=0}^{N+M}=\left[\frac{x^{M+1}s(x)-y^{M+1}s(y)}{x-y}\right]_{N+M+1}
=\left[\frac{\frac{x^{M+1}}{t(x)}-\frac{y^{M+1}}{t(y)}}{x-y}\right]_{N+M+1} \ \ (\times t(x)t(y))
\\&\ =\left[\frac{x^{M+1}t(y)-y^{M+1}t(x)}{x-y}\right]_{N+M+1}
\\&\ =\left[\frac{t_M(y)x^{M+1}-t_M(x)y^{M+1}}{x-y}+\frac{(t(y)-t_M(y))x^{M+1}-(t(x)-t_M(x))y^{M+1}}{x-y}\right]_{N+M+1}
\\&\ =\Bigg[\frac{(x^{M+1}-y^{M+1})+t_1xy(x^{M}-y^{M})+\cdots+t_{M}(xy)^{M}(x-y)}{x-y}
\\&\ \ \ \ \ \ \ \ \ \ \ \ \ \ \ \ \ \ \ \ +(xy)^{M+1}\frac{\frac{(t(y)-t_M(y))}{y^{M+1}}-\frac{(t(x)-t_M(x))}{x^{M+1}}}{x-y}\Bigg]_{N+M+1}.
\end{align*}
According to the definition of $[\ \cdot\ ]_n$, it follows that 
\begin{align*}
&\det(s_{i+j-M})_{i,j=0}^{N+M}=\det\left(
\begin{array}{cc}
A & 0\\
0& B\\
\end{array}
\right)_{(N+M+1)\times (N+M+1)},
\end{align*}
where 
\begin{align*}
A=\left(
\begin{array}{ccccc}
0 & 0&  \cdots & 0 & 1 \\
0 & 0 & \cdots & 1 & t_1 \\
\vdots & \vdots & \ddots & \vdots & \vdots \\
0 & 1 & \cdots & t_{M-2} & t_{M-1} \\
1 & t_1 & \cdots & t_{M-1} & t_{M} \\
\end{array}
\right)_{(M+1)\times (M+1)}
\end{align*}
and
\begin{align*}
B=\left(
\begin{array}{cccc}
-t_{M+2} & -t_{M+3} & \cdots  & -t_{M+N+1} \\
-t_{M+3}  & -t_{M+4} &\cdots  & -t_{M+N+2} \\
\vdots & \vdots & \ddots & \vdots \\
-t_{M+N+1} & -t_{M+N+2} & \cdots & -t_{M+2N} \\
\end{array}
\right)_{N\times N}.
\end{align*}
It is clear that 
\begin{align*}
\det A=(-1)^{\binom{M+1}{2}}\ \ \text{and }\ \ \det B=(-1)^N\det(t_{i+j+M+2})_{i,j=0}^{N-1}.
\end{align*}
This completes the proof.
\end{proof}

\begin{thm}\label{FK-convlution}
Let $F(x)$ be a power series satisfying the following quadratic algebraic equation 
$$w(x)+u(x)F(x)+v(x)F(x)^2=0,$$
where $w(x),u(x),v(x)$ are three polynomials with $v(0)=0$, $u(0)\neq 0$.
Then for $k\geq 1$ we have 
\begin{align}\label{FXK-convol-Equa}
F(x)^k=\frac{-w(x)^k}{F(x)^kv(x)^k+\sum_{i=0}^{\lfloor k/2\rfloor}(-1)^{k+i+1}T(k,i)u(x)^{k-2i}v(x)^iw(x)^i},
\end{align}
where $T(k,i)=\frac{(k-i-1)!\cdot k}{i!\cdot (k-2i)!}$.
\end{thm}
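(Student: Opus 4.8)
The plan is to factor the quadratic relation and recognize the right-hand side as a power-sum (Waring) identity for the two roots of an associated quadratic. First I would set $G=u(x)+v(x)F(x)$ and rewrite the hypothesis $w+uF+vF^2=0$ as $F\cdot G=-w$. Substituting $F=-w/G$ back in shows that $G$ itself satisfies $G^2-uG+vw=0$; the second root of $t^2-ut+vw=0$ is $G':=u-G=-vF$, so that $G+G'=u$ and $GG'=vw$. Since $v(0)=0$ and $u(0)\neq 0$ we have $G(0)=u(0)\neq 0$, whence $G$ is invertible in $\mathbb{R}[[x]]$; raising $FG=-w$ to the $k$-th power gives $F^k=(-1)^k w^k/G^k$. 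Consequently the claimed identity $F^k=-w^k/D$, where $D$ denotes the denominator on the right-hand side of \eqref{FXK-convol-Equa}, is equivalent to the single assertion $G^k=(-1)^{k+1}D$.

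Next I would clear the $F$-dependence from $D$. Because $G'=-vF$, we have $v^kF^k=(-1)^kG'^k$, so the first term $F^kv^k$ of $D$ becomes $(-1)^kG'^k$. A short sign computation (using $(-1)^k(-1)^{k+1}=-1$ and $(-1)^{k+i+1}(-1)^{k+1}=(-1)^i$) then reduces the target $G^k=(-1)^{k+1}D$ to the purely symmetric statement
$$G^k+G'^k=\sum_{i=0}^{\lfloor k/2\rfloor}(-1)^i\,T(k,i)\,u^{k-2i}(vw)^i,$$
after replacing $vw=GG'$ and $u=G+G'$.

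Finally I would prove this power-sum formula. The key observation is that
$$T(k,i)=\frac{(k-i-1)!\cdot k}{i!\,(k-2i)!}=\frac{k}{k-i}\binom{k-i}{i}$$
is exactly the coefficient in Waring's formula expressing $p_k=\alpha^k+\beta^k$ in terms of $e_1=\alpha+\beta$ and $e_2=\alpha\beta$. I would argue by induction on $k$ via the Lucas recurrence
$$p_k=(G+G')\,p_{k-1}-GG'\,p_{k-2}=u\,p_{k-1}-vw\,p_{k-2},$$
with base cases $k=1$ (where $p_1=u$ matches $T(1,0)=1$) and $k=2$ (where $p_2=u^2-2vw$ matches $T(2,0)=1$, $T(2,1)=2$). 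The inductive step comes down to the Pascal-type recurrence $T(k,i)=T(k-1,i)+T(k-2,i-1)$, which follows from a direct factorial computation: after clearing common factors one verifies $(k-1)(k-2i)+(k-2)i=k(k-i-1)$.

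The main obstacle is this last step, and in particular the bookkeeping of the boundary terms when $i$ reaches $0$ or $\lfloor k/2\rfloor$, where the summation range depends on the parity of $k$. Once one adopts the convention that $T(k,i)=0$ outside $0\le i\le\lfloor k/2\rfloor$, the recurrence $T(k,i)=T(k-1,i)+T(k-2,i-1)$ applies uniformly to every index, the boundary contributions vanish automatically, and the induction closes, completing the proof.
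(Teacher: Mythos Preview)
Your proof is correct and is essentially the paper's own argument. The paper offers two proofs: the second sets $X=v(x)F(x)$ and $Y=w(x)/F(x)$, observes $X+Y=-u$ and $XY=vw$, and reduces the claim to the Waring identity $X^k+Y^k=L_k(x)$; your $G'=-X$ and $G=-Y$ are just the negatives of these, which accounts for your cleaner sign $G+G'=u$. For the Waring identity itself you argue by the Lucas recurrence $p_k=up_{k-1}-vw\,p_{k-2}$ together with $T(k,i)=T(k-1,i)+T(k-2,i-1)$, which is exactly the content of the paper's \emph{first} proof (the paper's second proof instead extracts coefficients from $\log(1+e_1t+e_2t^2)$). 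So your write-up is a hybrid of the paper's two proofs, but no new idea is involved.
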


Note that conditions $v(0)=0$ and $u(0)\neq 0$ ensure the uniqueness of the power series $F(x)$. Readers can refer to \cite{HanGuoNiu}.

\begin{rem}
In fact, we found that $T(k,i)$, $k>0, 0\leq i\leq\lfloor\frac{k}{2}\rfloor$ is the triangle of coefficients of Lucas polynomials, see \cite[A034807]{Sloane23}. It is easy to verify that $T(k,i)$ satisfies the following recursive formula:
\begin{align*}
&T(k,i)=T(k-1,i)+T(k-2,i-1),\ \ k>1.
\\& T(k,0)=1,\ \ k>0.
\end{align*}
\end{rem}

For convenience, we define 
\begin{align}\label{LKX-Convenien}
L_k(x)=\sum_{i=0}^{\lfloor k/2\rfloor}(-1)^{k+i}T(k,i)u(x)^{k-2i}v(x)^iw(x)^i.
\end{align}
For example, the first polynomials $L_k(x)$ are 
\begin{align*}
&L_1(x)=-u(x),\ \ \ L_2(x)=u(x)^2-2v(x)w(x),\ \ \ L_3(x)=-u(x)^3+3u(x)v(x)w(x),
\\& L_4(x)=u(x)^4-4u(x)^2v(x)w(x)+2v(x)^2w(x)^2, 
\\& L_5(x)=-u(x)^5+5u(x)^3v(x)w(x)-5u(x)v(x)^2w(x)^2.
\end{align*}

\begin{proof}[First proof of Theorem \ref{FK-convlution}]
We found that Equation \eqref{FXK-convol-Equa} is equivalent to 
\begin{align}\label{Lemm-F-F-L}
F(x)^kv(x)^k+\frac{w(x)^k}{F(x)^k}=L_k(x).
\end{align}
Firstly, we verify that $L_k(x)$ satisfies the recursion formula
$$L_{k+1}(x)=-u(x)L_k(x)-v(x)w(x)L_{k-1}(x).$$
We infer that 
\begin{align*}
L_{k+1}(x)&=\sum_{i=0}^{\lfloor (k+1)/2\rfloor}(-1)^{k+1+i}T(k+1,i)u(x)^{k+1-2i}v(x)^iw(x)^i
\\&=\sum_{i=0}^{\lfloor (k+1)/2\rfloor}(-1)^{k+1+i}(T(k,i)+T(k-1,i-1))u(x)^{k+1-2i}v(x)^iw(x)^i
\\&=\sum_{i=0}^{\lfloor (k)/2\rfloor}(-1)^{k+1+i}T(k,i)u(x)^{k+1-2i}v(x)^iw(x)^i
\\&\ \ \ \ \ +\sum_{i=1}^{\lfloor (k+1)/2\rfloor}(-1)^{k+1+i}T(k-1,i-1)u(x)^{k+1-2i}v(x)^iw(x)^i
\\&=-u(x)L_k(x)+\sum_{i=0}^{\lfloor (k-1)/2\rfloor}(-1)^{k+i+2}T(k-1,i)u(x)^{k-1-2i}v(x)^{i+1}w(x)^{i+1}
\\&=-u(x)L_k(x)-v(x)w(x)L_{k-1}(x).
\end{align*}
Secondly, let us define 
$$G_k(x):=F(x)^kv(x)^k+\frac{w(x)^k}{F(x)^k}.$$
Then we obtain 
\begin{align*}
G_{k+1}(x)&=F(x)^{k-1}v(x)^k(F(x)^2v(x))+\frac{w(x)^k}{F(x)^{k-1}}\left(\frac{w(x)}{F(x)^2}\right)
\\&=F(x)^{k-1}v(x)^k(-u(x)f(x)-w(x))+\frac{w(x)^k}{F(x)^{k-1}}\left(-v(x)-\frac{u(x)}{F(x)}\right)
\\&=-u(x)\left(F(x)^kv(x)^k+\frac{w(x)^k}{F(x)^k}\right)-v(x)w(x)\left(F(x)^{k-1}v(x)^{k-1}+\frac{w(x)^{k-1}}{F(x)^{k-1}}\right)
\\&=-u(x)G_k(x)-v(x)w(x)G_{k-1}(x).
\end{align*}
That is, $L_k(x)$ and $G_k(x)$ satisfy the same recursion formula.
Finally, it can easily be shown that 
$L_1(x)=G_1(x)=-u(x)$ and 
$$L_2(x)=G_2(x)=u(x)^2-2v(x)w(x).$$
This completes the proof.
\end{proof}

For readers familiar with symmetric functions \cite[Chapter 7]{RP.Stanley99}, we provide the following proof.
\begin{proof}[Second proof of Theorem \ref{FK-convlution}]
Similarly, our task is to prove 
\begin{align*}
F(x)^kv(x)^k+\frac{w(x)^k}{F(x)^k}=L_k(x).
\end{align*}
Let $X:=F(x)v(x)$ and $Y:=\frac{w(x)}{F(x)}$. The problem was reduced to proving $X^k+Y^k=L_k(x)$.
It can easily be shown that $X+Y=-u(x)=L_1(x)$ and $XY=v(x)w(x)$.
We immediately obtain $X^2+Y^2=(X+Y)^2-2XY=u(x)^2-2v(x)w(x)=L_2(x)$.

Now we define $p_k(X,Y)=X^k+Y^k$ and $e_1(X,Y)=X+Y$, $e_2(X,Y)=XY$.
(In fact, $p_k$ and $e_k$ are respectively called power sum symmetric functions and elementary symmetric functions \cite[Chapter 7]{RP.Stanley99}.)
By applying the well-known Waring's formula \cite[Page 434]{RP.Stanley99}, we obtain
$$\sum_{k\geq 1}\frac{(-1)^{k-1}p_k(X,Y)}{k}t^k=\log (1+e_1(X,Y)t+e_2(X,Y)t^2).$$
Therefore, we have 
\begin{align*}
\sum_{k\geq 1}\frac{(-1)^{k-1}p_k(X,Y)}{k}t^k&=\sum_{n\geq 1}\frac{(-1)^{n-1}}{n}(e_1(X,Y)t+e_2(X,Y)t^2)^n
\\&=\sum_{n\geq 1}\frac{(-1)^{n-1}}{n}\sum_{i=0}^n\binom{n}{i}e_1(X,Y)^{n-i}e_2(X,Y)^it^{n+i}.
\end{align*}
By taking the coefficients of $t^k$ on both sides of the equation, we obtain
$$\frac{(-1)^kp_k(X,Y)}{k}=\sum_{i=0}^{\lfloor k/2\rfloor}\frac{(-1)^{k-i-1}}{k-i}\binom{k-i}{i}e_1(X,Y)^{k-2i}e_2(X,Y)^i.$$
Combining \eqref{LKX-Convenien} with $e_1(X,Y)=X+Y=-u(x)$ and $e_2(X,Y)=XY=v(x)w(x)$, this completes the proof.
\end{proof}

The following theorem extends Cigler's result (Theorem \ref{Cigler-Catalan}).
\begin{thm}\label{Main-Theorem}
Let $k$ be a positive integer and $m\in \mathbb{N}$.
Let $F(x)\in \mathbb{R}[[x]]$ be a power series satisfying the following quadratic algebraic equation 
$$1+u(x)F(x)+x^aF(x)^2=0,$$
where $u(x)$ is a polynomial and $a\geq 1$.
Now 
$$L_k(x)=\sum_{i=0}^{\lfloor k/2\rfloor}(-1)^{k+i}T(k,i)u(x)^{k-2i}x^{ai}.$$
Let $\deg L_{2k}(x)=b^{\prime}$. 
If $ka\geq b^{\prime}$, then we have 
\begin{align}
&D_{2k,1-m-ak}(N)=0\ \ \ \text{for}\ \ \ N=1,2,\ldots,m+ak-1,\label{Equ-Even-N}
\\& D_{2k,1-m-ak}(n+m+ak)=(-1)^{\binom{m+ak}{2}} D_{2k,m+1-ak}(n)\ \ \ \ \text{for all}\ n\in \mathbb{N}.\label{Equ-Even-Rev}
\end{align}
Let $\deg L_{2k-1}(x)=b^{\prime\prime}$. 
If $ka-1\geq b^{\prime\prime}$, then we have 
\begin{align}
&D_{2k-1,2-m-ak}(N)=0\ \ \ \text{for}\ \ \ N=1,2,\ldots,m+ak-2,\label{Equ-Odd-N}
\\& D_{2k-1,2-m-ak}(n+m+ak-1)=(-1)^{\binom{m+ak-1}{2}} D_{2k-1,m+a-ak}(n)\ \ \ \ \text{for all}\ n\in \mathbb{N}.\label{Equ-Odd-Rev}
\end{align}
\end{thm}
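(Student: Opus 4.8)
The plan is to reduce the general statement, Theorem \ref{Main-Theorem}, to the already-proved Lemma \ref{Cigler-ST} via the convolution identity of Theorem \ref{FK-convlution}. Specializing Theorem \ref{FK-convlution} to $w(x)=1$ and $v(x)=x^a$, Equation \eqref{Lemm-F-F-L} gives $F(x)^kx^{ak}+F(x)^{-k}=L_k(x)$, so
\begin{align*}
F(x)^k=\frac{1}{L_k(x)-x^{ak}F(x)^k}=\frac{-1}{x^{ak}F(x)^k-L_k(x)}.
\end{align*}
The key observation is that the denominator $s_k(x):=-x^{ak}F(x)^k+L_k(x)$ is, up to the indicated shift and sign, the reciprocal series of the convolution $F(x)^k$ we wish to study. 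First I would set $s(x)=L_k(x)-x^{ak}F(x)^k$ so that $t(x)=1/s(x)=-F(x)^k$ (using $L_k(0)=u(0)^k\neq0$ after normalizing $u(0)$; the constant term is arranged so that $s_0=1$). The coefficients $t_n$ are then (up to sign) exactly the convolution coefficients $a_{k,n}$ appearing in $D_{K,M}(N)$, while the coefficients $s_n$ of the numerator come from the \emph{polynomial} $L_k(x)$ together with the tail $x^{ak}F(x)^k$.

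The heart of the argument is the hypothesis $ka\geq b'$ (respectively $ka-1\geq b''$). Because $\deg L_{2k}(x)=b'\leq ak$, the polynomial part $L_{2k}(x)$ and the series part $x^{a\cdot 2k}F(x)^{2k}$ occupy \emph{disjoint} ranges of degrees: $L_{2k}(x)$ contributes to degrees $0,\dots,b'$, whereas $x^{2ak}F(x)^{2k}$ begins at degree $2ak\geq b'+ak>b'$. This degree separation is precisely what produces the vanishing in \eqref{Equ-Even-N} and, after applying Lemma \ref{Cigler-ST}, the reversal identity \eqref{Equ-Even-Rev}. Concretely, I would apply Lemma \ref{Cigler-ST} with the substitution $M\mapsto m+ak-1$ (even case) and $N\mapsto$ the appropriate order, so that the left-hand determinant $\det(s_{i+j-M})$ picks up only the polynomial coefficients of $L_{2k}$ in its active block, forcing it to split into a lower-triangular-type block $A$ (whose determinant is the sign $(-1)^{\binom{M+1}{2}}$) and a block $B$ built from the $t_n=-a_{2k,n}$ that reassembles into a shifted Hankel determinant of $F^{2k}$. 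Matching indices carefully then converts Lemma \ref{Cigler-ST} into the stated identity relating $D_{2k,1-m-ak}(\,\cdot\,)$ to $D_{2k,m+1-ak}(\,\cdot\,)$, and the sign $(-1)^{\binom{m+ak}{2}}$ emerges from the $\det A$ factor.

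The odd case $K=2k-1$ is handled identically, now using $\deg L_{2k-1}(x)=b''\leq ak-1$ so that the polynomial part of $x^{a(2k-1)}F(x)^{2k-1}+L_{2k-1}(x)$ again separates from its tail, with the shift parameters adjusted to $M\mapsto m+ak-2$ and the reversal landing at $D_{2k-1,m+a-ak}$. I would then read off the vanishing range \eqref{Equ-Odd-N} and the sign $(-1)^{\binom{m+ak-1}{2}}$ from the corresponding $\det A$. The main obstacle I anticipate is purely bookkeeping: keeping the shift index $M$, the reflected Hankel index, and the degree-bound inequality all consistent so that the disjoint-degree block decomposition of Lemma \ref{Cigler-ST} lines up exactly with the asserted shift $M=1-m-ak$ and its partner $m+1-ak$ (and the odd analogues). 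In particular, verifying that $L_k(0)$ can be normalized to $1$ (or tracking the extra power of $u(0)$ if it cannot) and that the tail $x^{ak}F(x)^k$ never intrudes on the block $A$ within the required range of indices is the delicate point; once the degree separation $ka\geq b'$ is used to guarantee this non-interference, the rest follows mechanically from Lemma \ref{Cigler-ST}.
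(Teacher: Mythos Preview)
Your plan has the right ingredients---Theorem \ref{FK-convlution}, Lemma \ref{Cigler-ST}, and the degree-separation hypothesis---and is essentially the paper's argument, but you have swapped the roles of $s$ and $t$, and that swap is what generates all of the ``bookkeeping obstacles'' you anticipate.  The paper takes $s(x)=F(x)^{K}$ (with $K=2k$ or $2k-1$) and $t(x)=1/s(x)=L_K(x)-(x^aF(x))^{K}$, not the other way around.  With this choice the left-hand side of Lemma \ref{Cigler-ST} is \emph{literally} $D_{K,1-m-ak}(n+m+ak)$ (respectively $D_{K,2-m-ak}(n+m+ak-1)$), with no block decomposition needed at all.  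On the right-hand side one only has to observe that the indices $i+j+M+2\geq M+2=m+ak+1>b'$ (respectively $>b''$), so the polynomial part $L_K(x)$ contributes nothing and $t_n=-a_{K,n-aK}$ for every index that appears; this turns the right-hand side into $(-1)^n D_{K,m+1-ak}(n)$ (respectively $(-1)^n D_{K,m+a-ak}(n)$) in one line.

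With your reversed choice $s=L_K-(x^aF)^K$, $t=F^K$, the right-hand side of Lemma \ref{Cigler-ST} becomes $D_{K,M+2}(N)$ with shift $M+2=m+ak+1$, which is neither of the two target shifts; and the left-hand side is a Hankel determinant of the mixed series $s$, whose low-index entries come from $L_K$ and whose high-index entries come from $-(x^aF)^K$.  You are then forced to re-enter the proof of Lemma \ref{Cigler-ST} and redo the block decomposition by hand---this is what your ``block $A$ / block $B$'' paragraph is describing.  It can be pushed through, but it amounts to reproving the lemma in a special case rather than applying it.  Swap $s$ and $t$; then your worries about normalizing $L_K(0)$ and about the tail ``intruding on block $A$'' evaporate, because no block analysis is required.  (Minor points: with your identity $F^K=1/(L_K-(x^aF)^K)$ one gets $t=F^K$, not $-F^K$; and be careful that the tail is $x^{aK}F^K$ with $K\in\{2k,2k-1\}$, not $x^{ak}F^k$.)
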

\begin{proof}
The identities \eqref{Equ-Even-N} and \eqref{Equ-Odd-N} can be easily obtained as the first rows of the Hankel matrices vanish.
Now we use Lemma \ref{Cigler-ST} to prove the identities \eqref{Equ-Even-Rev} and \eqref{Equ-Odd-Rev}. 

To prove \eqref{Equ-Even-Rev}, we choose $s(x)=F(x)^{2k}$, $M=m+ak-1$, and $N=n$. Then we obtain 
$$\det(s_{i+j-M})_{i,j=0}^{N+M}=\det(s_{i+j-m-ak+1})_{i,j=0}^{n+m+ak-1}=D_{2k,1-m-ak}(n+m+ak).$$
By \eqref{Lemm-F-F-L}, we get $t(x)=L_{2k}(x)-(x^aF(x))^{2k}$. Note that $L_{2k}(x)$ is a polynomial of degree $b^{\prime}$ in $x$.
Therefore, we have i): $t_{2ka+r}=-a_{2k,r}$ for $r\geq 0$; ii): $t_n=a_{2k,n-2ka}=0$ for $b^{\prime}<n<2ka$.
Thus, we get $t_{r+m+1+b^{\prime}}=-a_{2k,r+m+1+b^{\prime}-2ka}$ for $r+m+1>0$.
This gives 
\begin{align*}
&\det(t_{i+j+M+2})_{i,j=0}^{N-1}=\det(t_{i+j+m+ak+1})_{i,j=0}^{n-1}
\\=&\det(-a_{2k,i+j+m+1+ak-2ka})_{i,j=0}^{n-1}\ \ \ (\text{By }\ ka\geq b^{\prime})
\\=&(-1)^n\det(a_{2k,i+j+m+1-ka})_{i,j=0}^{n-1}=(-1)^nD_{2k,m+1-ka}(n).
\end{align*}
Thus Lemma \ref{Cigler-ST} gives 
$$D_{2k,1-m-ak}(n+m+ak)=(-1)^{\binom{m+ak}{2}}D_{2k,m+1-ak}(n).$$

To prove \eqref{Equ-Odd-Rev}, we take $s(x)=F(x)^{2k-1}$, $M=m+ak-2$, and $N=n$. Then we get
$$\det(s_{i+j-M})_{i,j=0}^{N+M}=\det(s_{i+j-m-ak+2})_{i,j=0}^{n+m+ak-2}=D_{2k-1,2-m-ak}(n+m+ak-1).$$
According to \eqref{Lemm-F-F-L}, we obtain $t(x)=L_{2k-1}(x)-(x^aF(x))^{2k-1}$. Note that $L_{2k-1}(x)$ is a polynomial of degree $b^{\prime\prime}$ in $x$.
Therefore, we have i): $t_{(2k-1)a+r}=-a_{2k-1,r}$ for $r\geq 0$; ii): $t_n=a_{2k-1,n-(2k-1)a}=0$ for $b^{\prime\prime}<n<(2k-1)a$.
Thus, we get $t_{r+m+1+b^{\prime\prime}}=-a_{2k-1,r+m+1+b^{\prime\prime}-(2k-1)a}$ for $r+m+1>0$.
This gives 
\begin{align*}
&\det(t_{i+j+M+2})_{i,j=0}^{N-1}=\det(t_{i+j+m+1+ak-1})_{i,j=0}^{n-1}
\\=&\det(-a_{2k-1,i+j+m+ak-(2k-1)a})_{i,j=0}^{n-1}\ \ \ (\text{By }\ ak-1\geq b^{\prime\prime})
\\=&(-1)^n\det(a_{2k-1,i+j+m+a-ka})_{i,j=0}^{n-1}=(-1)^nD_{2k-1,m+a-ka}(n).
\end{align*}
Thus Lemma \ref{Cigler-ST} gives 
$$D_{2k-1,2-m-ak}(n+m+ak-1)=(-1)^{\binom{m+ak-1}{2}}D_{2k-1,m+a-ak}(n).$$

The proof is completed.
\end{proof}

\section{Applications}

\subsection{Catalan Numbers}

Let $F(x)=\frac{1-\sqrt{1-4x}}{2x}$ be the generating function of the Catalan numbers.
Then $a_{k,n}=\frac{k}{n+k}\binom{2n+k-1}{n}$ can be obtained from Lagrange inversion \cite[Chapter 5]{RP.Stanley99}.
At this time, the convolution powers $a_{k,n}$ are the number of all non-negative lattice paths in $\mathbb{N}^2$ with up-steps $U=(1,1)$ and down steps $D=(1,-1)$ from $(0,0)$ to $(2n+k-1,k-1)$, see \cite{Cigler-2403}.

The following results are first proven by Wang and Xin \cite{Wang-Xin}. Wang-Xin's proof employs a hypergeometric sum identity and
the EKHAD Maple package of Zeilberger's creative telescoping \cite{Zeilberger}. Here we provide a simple (and short) proof.

\begin{cor}{\em \cite[Proposition 23]{Wang-Xin}}
Let $F(x)$ be the generating function of the Catalan numbers. Then we have
\begin{align*}
L_k(x)=\sum_{i=0}^{\lfloor k/2\rfloor}(-1)^{i}\frac{(k-i-1)!\cdot k}{i!\cdot (k-2i)!}x^{i}.
\end{align*}
\end{cor}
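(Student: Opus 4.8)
The plan is to obtain this identity as an immediate specialization of the general definition \eqref{LKX-Convenien} of $L_k(x)$, so that no summation identity is needed at all. First I would record the quadratic equation satisfied by the Catalan generating function. Since $F(x)=\frac{1-\sqrt{1-4x}}{2x}$, a short direct computation (or the classical identity) gives $1-F(x)+xF(x)^2=0$. Comparing with the normalized form $w(x)+u(x)F(x)+v(x)F(x)^2=0$ used in Theorem \ref{FK-convlution}, I read off $w(x)=1$, $u(x)=-1$, and $v(x)=x$. These satisfy the standing hypotheses $v(0)=0$ and $u(0)\neq 0$, so Theorem \ref{FK-convlution} and the definition of $L_k(x)$ apply directly.

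Next I would substitute these three polynomials into \eqref{LKX-Convenien}. This turns the general term $(-1)^{k+i}T(k,i)u(x)^{k-2i}v(x)^iw(x)^i$ into $(-1)^{k+i}T(k,i)(-1)^{k-2i}x^i$, since $w(x)^i=1$ and $v(x)^i=x^i$. The only genuine computation is the sign bookkeeping: combining the two sign factors gives $(-1)^{k+i}(-1)^{k-2i}=(-1)^{2k-i}=(-1)^i$. Hence the $i$-th summand collapses to $(-1)^iT(k,i)x^i$, and recalling $T(k,i)=\frac{(k-i-1)!\cdot k}{i!\cdot(k-2i)!}$ reproduces exactly the claimed expression.

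There is essentially no obstacle here: the statement is a clean corollary of Theorem \ref{FK-convlution} once the coefficients $(w,u,v)=(1,-1,x)$ are identified, and the entire content is the one-line sign simplification together with the observation that the powers of $v(x)=x$ produce precisely the monomials $x^i$. The advantage of this route, compared with the original hypergeometric and creative-telescoping argument of Wang and Xin, is that it sidesteps any nontrivial summation identity and reduces everything to substituting specific polynomials into the symmetric-function formula already established in Theorem \ref{FK-convlution}.
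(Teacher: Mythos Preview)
Your proposal is correct and follows essentially the same route as the paper: identify $u(x)=-1$ (with $w(x)=1$, $v(x)=x$, i.e.\ $a=1$) from the quadratic equation $1-F(x)+xF(x)^2=0$, substitute into the general definition \eqref{LKX-Convenien} of $L_k(x)$, and simplify the sign. The paper states this in two lines; your version just makes the sign bookkeeping explicit.
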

\begin{proof}
It is well known that $1-F(x)+xF(x)^2=0$. Thus, we get $u(x)=-1$ and $a=1$ in Theorem \ref{Main-Theorem}.
The corollary follows immediately from $T(k,i)=\frac{(k-i-1)!\cdot k}{i!\cdot (k-2i)!}$.
\end{proof}

Now that $\deg L_k(x)=\lfloor \frac{k}{2}\rfloor$, it follows that Theorem \ref{Cigler-Catalan} holds.
As special cases, Theorem \ref{Cigler-Catalan} can derive some results in \cite[Corollary 12,13,15]{cigler2011some}.

Let $a_n$ be the number of lattice paths in $\mathbb{N}^2$ from $(0,0)$ to $(n,0)$ using only steps $H=(1,0)$, $U=(1,1)$, and $D=(2,-1)$. The sequence $(a_n)_{n\geq 0}$ is called \emph{generalized Catalan numbers}. The generating function \cite[A023431]{Sloane23} of $a_n$ is
$$F(x)=\sum_{n\geq 0}a_nx^n=\frac{1-x-\sqrt{1-2x+x^2-4x^3}}{2x^3}.$$
Now the generating function $F(x)$ satisfies
$$1+(x-1)F(x)+x^3F(x)^2=0.$$
This satisfies the conditions of Theorem \ref{Main-Theorem}. Therefore, we can obtain a result similar to Theorem \ref{Cigler-Catalan}. We omit it. A similar argument can be found in the next section.

\subsection{Motzkin Numbers}

Let 
$$F(x)=\sum_{n\geq 0}a_nx^n=\frac{1-x-\sqrt{1-2x-3x^2}}{2x^2}$$
be the generating function of the \emph{Motzkin numbers} $a_n=\sum_{i\geq 0}\frac{1}{i+1}\binom{n}{2i}\binom{2i}{i}$.
The Motzkin numbers $a_n$ count the number of lattice paths from $(0,0)$ to $(n,0)$ consisting of up-steps $(1,1)$, down-steps $(1,-1)$ and horizontal-steps $(1,0)$ that never fall below the $x$-axis. 
The generating function $F(x)$ satisfies
$$1+(x-1)F(x)+x^2F(x)^2=0.$$
At this time, we have $u(x)=x-1$ and $a=2$. Furthermore, we get 
$$L_k(x)=\sum_{i=0}^{\lfloor k/2 \rfloor}(-1)^{k+i} T(k,i)(x-1)^{k-2i}x^{2i}$$
and $\deg L_k(x)=k$.

\begin{cor}\label{Cor-Motzkin}
Let $F(x)$ be the generating function of the Motzkin numbers. Let $k$ be a positive integer and $m\in \mathbb{N}$.
Then for $K=2k$, we have the even identities 
\begin{align*}
&D_{2k,1-2k-m}(N)=0\ \ \ \text{for}\ \ \ N=1,2,\ldots,m+2k-1,
\\& D_{2k,1-2k-m}(n+m+2k)=(-1)^{\binom{m+2k}{2}} D_{2k,1-2k+m}(n)\ \ \ \ \text{for all}\ n\in \mathbb{N}.
\end{align*}
For $K=2k-1$ we have the odd identities 
\begin{align*}
&D_{2k-1,2-2k-m}(N)=0\ \ \ \text{for}\ \ \ N=1,2,\ldots,m+2k-2,
\\& D_{2k-1,2-2k-m}(n+m+2k-1)=(-1)^{\binom{m+2k-1}{2}} D_{2k-1,2-2k+m}(n)\ \ \ \ \text{for all}\ n\in \mathbb{N}.
\end{align*}
\end{cor}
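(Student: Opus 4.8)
The plan is to deduce Corollary \ref{Cor-Motzkin} as a direct specialization of Theorem \ref{Main-Theorem}, so there is essentially nothing to prove beyond identifying the data and checking the degree hypotheses. First I would read off from the defining quadratic equation $1+(x-1)F(x)+x^2F(x)^2=0$ that, in the notation of Theorem \ref{Main-Theorem}, this is the case $u(x)=x-1$ and $a=2$ (with $v(x)=x^2$, $w(x)=1$), so that
\[
L_k(x)=\sum_{i=0}^{\lfloor k/2\rfloor}(-1)^{k+i}T(k,i)(x-1)^{k-2i}x^{2i}.
\]
Everything then hinges on verifying the two degree hypotheses $ka\ge b'$ and $ka-1\ge b''$ of Theorem \ref{Main-Theorem}, where $b'=\deg L_{2k}$ and $b''=\deg L_{2k-1}$.

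The key observation is that each summand $(x-1)^{k-2i}x^{2i}$ has degree exactly $(k-2i)+2i=k$, so $\deg L_k\le k$ for every $k\ge1$. Hence $b'=\deg L_{2k}\le 2k=ka$ and $b''=\deg L_{2k-1}\le 2k-1=ka-1$, and both hypotheses hold with the bounds tight. If one wants the exact value $\deg L_k=k$ asserted in the paragraph preceding the corollary, I would specialize the recursion $L_{k+1}(x)=-u(x)L_k(x)-v(x)w(x)L_{k-1}(x)$ from the first proof of Theorem \ref{FK-convlution} to $L_{k+1}(x)=-(x-1)L_k(x)-x^2L_{k-1}(x)$ and track the leading coefficient $\ell_k=[x^k]L_k(x)$: it satisfies $\ell_{k+1}=-\ell_k-\ell_{k-1}$ with $\ell_1=\ell_2=-1$, so $(\ell_k)_{k\ge1}$ is the purely periodic sequence $-1,-1,2,-1,-1,2,\dots$, which never vanishes, whence $\deg L_k=k$. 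Either way the hypotheses of Theorem \ref{Main-Theorem} are met.

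Finally I would substitute $a=2$ into the conclusions \eqref{Equ-Even-N}--\eqref{Equ-Odd-Rev} and simplify the index shifts. For $K=2k$, the identities \eqref{Equ-Even-N} and \eqref{Equ-Even-Rev} become the stated even identities, using $1-m-ak=1-2k-m$, $m+1-ak=1-2k+m$, and $\binom{m+ak}{2}=\binom{m+2k}{2}$. For $K=2k-1$, the identities \eqref{Equ-Odd-N} and \eqref{Equ-Odd-Rev} become the odd identities, using $2-m-ak=2-2k-m$, $m+a-ak=2-2k+m$, and $\binom{m+ak-1}{2}=\binom{m+2k-1}{2}$. This reproduces Corollary \ref{Cor-Motzkin} verbatim.

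I expect no real obstacle: the only point requiring any thought is the verification of the degree hypotheses, and that reduces to the trivial remark that every monomial in the defining sum for $L_k$ has the common degree $k$. Consequently the corollary is a routine application of Theorem \ref{Main-Theorem} once the parameters $u(x)=x-1$ and $a=2$ are identified, the substance of the work having already been carried out in the proof of that theorem.
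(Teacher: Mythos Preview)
Your proposal is correct and matches the paper's approach exactly: the paper presents Corollary~\ref{Cor-Motzkin} as an immediate specialization of Theorem~\ref{Main-Theorem} after recording $u(x)=x-1$, $a=2$, and $\deg L_k(x)=k$, with no further argument given. Your additional verification that the leading coefficients $\ell_k$ are periodically $-1,-1,2,\dots$ (hence nonzero) is a nice touch that the paper omits, but as you note only the bound $\deg L_k\le k$ is actually needed for the degree hypotheses.
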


\begin{exa}\label{Example-D33}
In Corollary \ref{Cor-Motzkin}, for $k=2$ and $m=1$ we obtain
\begin{align*}
&(D_{4,-4}(n))_{n\geq 0}=(1,0,0,0,0,1,0,0,-1,-1,0,0,1,0,0,0,0,1,0,0,-1,\ldots),
\\& (D_{4,-2}(n))_{n\geq 0}=(1,0,0,-1,-1,0,0,1,0,0,0,0,1,0,0,-1,-1,0,0,1,0,\ldots),
\\& (D_{3,-3}(n))_{n\geq 0}=(1,0,0,0,1,0,-1,2,0,-2,3,0,-3,4,0,-4,5,0,-5,6,0,\ldots),
\\& (D_{3,-1}(n))_{n\geq 0}=(1,0,-1,2,0,-2,3,0,-3,4,0,-4,5,0,-5,6,0,-6,7,0,-7,\ldots).
\end{align*}
\end{exa}

When $F(x)$ is the generating function of the Motzkin numbers, we found that $L_k(x)$ is closely related to the Chebyshev polynomial of the first kind.
The \emph{Chebyshev polynomial of the first kind} $T_k(x), k\geq 1$ is defined by
\begin{align*}
T_1(x)=x,\ \ \ T_2(x)=2x^2-1,\ \ \ T_k(x)=2xT_{k-1}(x)-T_{k-2}(x)\ \ \text{for }\ k\geq 3.
\end{align*}

\begin{prop}
Let $F(x)$ be the generating function of the Motzkin numbers. Then
\begin{align*}
L_k(x)=2(-x)^k T_k\left(\frac{x-1}{2x}\right)\ \ \text{for }\ k\geq 1.
\end{align*}
\end{prop}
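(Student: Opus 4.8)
The plan is to prove the identity $L_k(x) = 2(-x)^k T_k\!\left(\frac{x-1}{2x}\right)$ by showing that both sides satisfy the same second-order linear recurrence together with the same two initial values, then invoking uniqueness of such sequences. This is the cleanest route because, for Motzkin numbers, we have $u(x) = x-1$ and $a = 2$, so from the first proof of Theorem \ref{FK-convlution} the polynomials $L_k(x)$ obey
\begin{align*}
L_{k+1}(x) = -u(x)L_k(x) - v(x)w(x)L_{k-1}(x) = (1-x)L_k(x) - x^2 L_{k-1}(x),
\end{align*}
using $w(x) = 1$, $v(x) = x^2$.

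First I would record the base cases. Directly from the definition \eqref{LKX-Convenien} with $u(x)=x-1$, one computes $L_1(x) = -u(x) = 1-x$ and $L_2(x) = u(x)^2 - 2v(x)w(x) = (x-1)^2 - 2x^2 = -x^2 - 2x + 1$. I would then check these match the right-hand side: with $T_1(z) = z$ and $z = \frac{x-1}{2x}$, the expression $2(-x)^1 T_1(z) = -2x\cdot\frac{x-1}{2x} = 1-x$, and with $T_2(z) = 2z^2 - 1$ one finds $2x^2 T_2(z) = 2x^2\!\left(2\cdot\frac{(x-1)^2}{4x^2} - 1\right) = (x-1)^2 - 2x^2 = -x^2-2x+1$, so both base cases agree.

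Next I would verify that the right-hand side $R_k(x) := 2(-x)^k T_k\!\left(\frac{x-1}{2x}\right)$ satisfies the very same recurrence. The key step is to translate the Chebyshev recurrence $T_{k+1}(z) = 2z\,T_k(z) - T_{k-1}(z)$ into a recurrence for $R_k$. Substituting $z = \frac{x-1}{2x}$ and multiplying through by the appropriate power $2(-x)^{k+1}$, I expect the factor $2z = \frac{x-1}{x}$ to produce exactly the coefficient $(1-x)$ in front of $R_k$, while the shift in the power of $(-x)$ produces the $-x^2$ coefficient in front of $R_{k-1}$; concretely,
\begin{align*}
R_{k+1}(x) = 2(-x)^{k+1}\!\left(2z\,T_k(z) - T_{k-1}(z)\right) = (1-x)R_k(x) - x^2 R_{k-1}(x),
\end{align*}
which is the same recurrence satisfied by $L_k(x)$. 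Since $L_k$ and $R_k$ satisfy an identical second-order linear recurrence and coincide at $k=1,2$, a routine induction forces $L_k(x) = R_k(x)$ for all $k \geq 1$.

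The only mild obstacle is the bookkeeping of the sign $(-x)^k$ and the factor $2$ when pushing the Chebyshev recurrence through the substitution $z = \frac{x-1}{2x}$; one must be careful that the $2z$ term and the index shift on the power of $(-x)$ combine to reproduce precisely the coefficients $(1-x)$ and $-x^2$. This is purely algebraic and presents no genuine difficulty, so the proof reduces to the recurrence-matching argument outlined above.
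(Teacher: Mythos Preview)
Your proposal is correct and follows essentially the same approach as the paper: both verify that $L_k(x)$ and $2(-x)^kT_k\!\left(\frac{x-1}{2x}\right)$ satisfy the common recurrence $P_{k+1}=(1-x)P_k-x^2P_{k-1}$ (invoking the recurrence for $L_k$ established in the first proof of Theorem~\ref{FK-convlution} and translating the Chebyshev recurrence for the right-hand side), and then match the initial values at $k=1,2$. The only cosmetic difference is that the paper also remarks why $2(-x)^kT_k\!\left(\frac{x-1}{2x}\right)$ is a genuine polynomial in $x$, but this is not needed for the recurrence-matching argument you give.
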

\begin{proof}
Let $Q_k(x)=2(-x)^k T_k\left(\frac{x-1}{2x}\right)$. Then $Q_k(x^{-1})=2(-x^{-1})^k T_k\left(\frac{1-x}{2} \right)$. Since $T_k(x)$ is a polynomial of degree $k$ in $x$, it follows that $Q_k(x)$ is a polynomial in $x$.
For $k\geq 2$, we have 
\begin{align*}
Q_{k+1}(x)&=2(-x)^{k+1}T_{k+1}\left(\frac{x-1}{2x}\right)=2(-x)^{k+1}\left(2\cdot\frac{x-1}{2x}T_k\left(\frac{x-1}{2x}\right)
-T_{k-1}\left(\frac{x-1}{2x}\right) \right)
\\&=-2(-x)^k(x-1)T_k\left(\frac{x-1}{2x}\right)-x^2\cdot 2(-1)^{k-1}x^{k-1}T_{k-1}\left(\frac{x-1}{2x}\right)
\\&=-(x-1)Q_k(x)-x^2Q_{k-1}(x).
\end{align*}
According to the proof of Theorem \ref{FK-convlution}, we know that $Q_k(x)$ and $L_k(x)$ satisfy the same recursion formula.
Some calculations give $Q_1(x)=-(x-1)=L_1(x)$ and $Q_2(x)=-x^2-2x+1=L_2(x)$, which completes the proof.
\end{proof}

We found an interesting fact that $Q_k(x)$ is the sequence [A102587] on OEIS \cite{Sloane23}.

%
%

\section{Sulanke-Xin's Quadratic Transformation $\tau$}

According to Example \ref{Example-D33}, one can observe that the values of $(D_{K,M}(n))_{n\geq 0}$ seem to be periodically shifted. In this section, we will explain that this is not occasional.

When $F(x)$ is the unique solution of a quadratic function equation, and $K\in \mathbb{N}$ and $M\in \mathbb{Z}$ are concrete values, we can use the \emph{Sulanke-Xin's quadratic transformation} (also called \emph{Sulanke-Xin's continued fraction method}) in \cite{Sulanke-Xin} to study the exact formula for $D_{K,M}(n)$. For instance, when $M=0$, some Hankel determinants $D_{K,0}(n)$ for convolution powers of Catalan numbers are given in \cite{Wang-Xin}; some Hankel determinants $D_{K,0}(n)$ for convolution powers of Motzkin numbers are studied in \cite{Wang-Zhang}.
Similarly, we can obtain some shifted (forward shifted for $M\geq 0$ and backward shifted for $M<0$) Hankel determinant $D_{K,M}(n)$ formulas by using Sulanke-Xin's quadratic transformation.

Now we introduce the Sulanke-Xin's quadratic transformation $\tau$.
Suppose the generating function $F(x)$ is the unique solution of a quadratic functional equation that can be written as
\begin{align}\label{xinF(x)}
F(x)=\frac{x^d}{u(x)+x^kv(x)F(x)},
\end{align}
where $u(x)$ and $v(x)$ are rational power series with nonzero constants, $d$ is a nonnegative integer, and $k$ is a positive integer.
We need the unique decomposition of $u(x)$ with respect to $d$: $u(x)=u_L(x)+x^{d+2}u_H(x)$ where $u_L(x)$ is a polynomial of degree at most $d+1$ and $u_H(x)$ is a power series.
Then \cite[Propositions 4.1 and 4.2]{Sulanke-Xin} can be summarized as follows.
\begin{prop}\label{xinu(0,i,ii)}
Let $F(x)$ be determined by \eqref{xinF(x)}. Then the quadratic transformation $\tau(F(x))$ of $F(x)$ defined as follows gives close connections between $\det \mathcal{H}(F(x))$ and $\det \mathcal{H}(\tau(F(x)))$.
\begin{enumerate}
\item[i)] If $u(0)\neq1$, then $\tau(F(x))=G(x)=u(0)F(x)$ is determined by 
$$G(x)=\frac{x^d}{u(0)^{-1}u(x)+x^ku(0)^{-2}v(x)G(x)},$$
and $\det \mathcal{H}_n(\tau(F(x)))=u(0)^{n}\det \mathcal{H}_n(F(x))$;

\item[ii)] If $u(0)=1$ and $k=1$, then $\tau(F(x))=x^{-1}(G(x)-G(0))$, where $G(x)$ is determined by
$$G(x)=\frac{-v(x)-xu_L(x)u_H(x)}{u_L(x)-x^{d+2}u_H(x)-x^{d+1}G(x)},$$
and we have
$$\det \mathcal{H}_{n-d-1}(\tau(F(x)))=(-1)^{\binom{d+1}{2}}\det \mathcal{H}_n(F(x));$$

\item[iii)] If $u(0)=1$ and $k\geq2$, then $\tau(F(x))=G(x)$, where $G(x)$ is determined by
$$G(x)=\frac{-x^{k-2}v(x)-u_L(x)u_H(x)}{u_L(x)-x^{d+2}u_H(x)-x^{d+2}G(x)},$$
and we have
$$\det \mathcal{H}_{n-d-1}(\tau(F(x)))=(-1)^{\binom{d+1}{2}}\det \mathcal{H}_n(F(x)).$$
\end{enumerate}
\end{prop}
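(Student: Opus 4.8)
The plan is to reduce everything to the bivariate kernel representation $\det \mathcal{H}_n(F(x)) = \left[\frac{xF(x)-yF(y)}{x-y}\right]_n$ recorded above, and to manipulate the kernel using the functional equation \eqref{xinF(x)} together with the product rules (exactly as in the proof of Lemma \ref{Cigler-ST}, where the kernel was multiplied by $t(x)t(y)$). Case i) is the easy one and I would dispatch it first: setting $G(x)=u(0)F(x)$ multiplies every entry $a_{i+j}$ of $\mathcal{H}_n(F)$ by $u(0)$, so $\det \mathcal{H}_n(G)=u(0)^n\det \mathcal{H}_n(F)$ is immediate, and substituting $F(x)=u(0)^{-1}G(x)$ into \eqref{xinF(x)} and clearing the constant yields the stated functional equation for $G$. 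The point of this reduction is that, after i), one may always assume $u(0)=1$, which is the hypothesis of ii) and iii).

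For ii) and iii) the strategy is to rewrite the kernel of $F$ as a product-rule equivalent of the kernel of $G$, shifted in index. Concretely, I would start from $F(x)\bigl(u(x)+x^kv(x)F(x)\bigr)=x^d$, substitute the resulting expression for $xF(x)$ into $\frac{xF(x)-yF(y)}{x-y}$, and clear denominators. Because $u(0)=1$, the series $u(x)$, $u_L(x)$ and the denominator $u(x)+x^kv(x)F(x)$ all have constant term $1$, so the product rules permit multiplying numerator and denominator by $u(x)u(y)$-type factors without changing $[\,\cdot\,]_n$. The decomposition $u(x)=u_L(x)+x^{d+2}u_H(x)$ is precisely what separates the finite polynomial (``corner'') part of the kernel from the part reproducing the kernel of $G$; this is where $u_L$ of degree at most $d+1$ and the exponents $x^{d+1}$ (when $k=1$) versus $x^{d+2}$ (when $k\ge 2$) enter, and it is what forces the extra shift $\tau(F)=x^{-1}(G-G(0))$ in ii) while leaving $\tau(F)=G$ in iii).

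The index drop and the sign come from the leading behavior of $F$. Since $u(0)=1$, \eqref{xinF(x)} gives $F(x)=x^d(1+O(x))$, so $a_0=\cdots=a_{d-1}=0$ and $a_d=1$; after the substitution this manifests as a $(d+1)\times(d+1)$ corner block of the transformed coefficient matrix that is anti-triangular with $1$'s on the anti-diagonal, exactly as with the matrix $A$ in the proof of Lemma \ref{Cigler-ST}. Splitting off this block contributes the factor $(-1)^{\binom{d+1}{2}}$ and leaves the $(n-d-1)\times(n-d-1)$ Hankel determinant of $\tau(F)$, giving $\det \mathcal{H}_{n-d-1}(\tau(F(x)))=(-1)^{\binom{d+1}{2}}\det \mathcal{H}_n(F(x))$. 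I would then verify the displayed functional equation for $G$ directly by algebraic rearrangement of the quadratic \eqref{xinF(x)}.

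The main obstacle will be the middle step: carrying out the product-rule manipulation of the kernel cleanly enough that the corner block is visibly anti-triangular and the remainder is visibly the kernel of $G$, while simultaneously tracking the correct new functional equation. The bookkeeping is delicate because the roles of $u_L$, $u_H$, $v$ and the exponent $k$ all interact, and the cases $k=1$ and $k\ge 2$ must be handled separately; in the former a reciprocal-type step (cf. Lemma \ref{Cigler-ST}) produces a genuine sequence shift, accounting for the $x^{-1}(G-G(0))$, whereas in the latter the extra power of $x$ preserves $G$ itself as the transform.
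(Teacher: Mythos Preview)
The paper does not prove this proposition at all. It is explicitly presented as a summary of Propositions~4.1 and~4.2 from Sulanke--Xin \cite{Sulanke-Xin}; the paper merely quotes the statements and then \emph{applies} them in the proof of Proposition~\ref{Prop-D33-proof}. So there is no proof in the paper to compare your attempt against.

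That said, your plan is reasonable in spirit and correctly identifies the mechanism: case~i) is a trivial rescaling, and in cases~ii)--iii) the leading behaviour $F(x)=x^d(1+O(x))$ forces an anti-triangular $(d{+}1)\times(d{+}1)$ corner block with determinant $(-1)^{\binom{d+1}{2}}$, after which one must recognize the remaining $(n{-}d{-}1)\times(n{-}d{-}1)$ block as the Hankel matrix of the transform. But what you have written is a strategy, not a proof: you yourself flag that the ``middle step'' --- actually carrying out the kernel manipulation so that the residual block is visibly $\mathcal{H}_{n-d-1}(\tau(F))$ and simultaneously reading off the new quadratic equation for $G$ --- is the real work, and you do not perform it. In particular, the distinction between $k=1$ (where $\tau(F)=x^{-1}(G-G(0))$) and $k\ge 2$ (where $\tau(F)=G$) requires a concrete computation showing exactly how the extra power of $x$ shifts or fails to shift the sequence; asserting that ``a reciprocal-type step'' handles it is not enough. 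If you intend to supply a self-contained proof rather than cite \cite{Sulanke-Xin}, you would need to execute those algebraic rearrangements explicitly.
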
 

As the proof of these results is similar to that of Example \ref{Example-D33}, now we will only study $D_{3,-3}(n)$.
\begin{prop}\label{Prop-D33-proof}
Let $F(x)$ be the generating function of the Motzkin numbers. Then we have 
$$D_{3,-3}(0)=1,\ \ D_{3,-3}(1)=D_{3,-3}(2)=D_{3,-3}(3)=0,$$
and for $n\geq 4$, 
\[ D_{3,-3}(n)=\begin{cases}
k &\ \text{if}\ \ n=3k+1; \\
0 &\ \text{if}\ \ n=3k+2; \\
-k &\ \text{if}\ \ n=3k+3. \\
\end{cases}\]
\end{prop}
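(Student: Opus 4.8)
The plan is to trade the backward shift $M=-3$ for the shift $M=-1$ using the reversal identity already proved, and then to evaluate the remaining determinants with Sulanke-Xin's transformation $\tau$. First I apply Corollary \ref{Cor-Motzkin} with $k=2$ and $m=1$. Its vanishing part gives $D_{3,-3}(N)=0$ for $N=1,2,3$, while $D_{3,-3}(0)=1$ holds by definition, so the first four values are settled. Its reversal part reads $D_{3,-3}(n+4)=(-1)^{\binom{4}{2}}D_{3,-1}(n)=D_{3,-1}(n)$ for all $n\in\mathbb{N}$, since $\binom{4}{2}=6$ is even. Hence it suffices to show that $D_{3,-1}(m)=\det\mathcal{H}_m(xF(x)^3)$ equals $j+1$, $0$, $-(j+1)$ according as $m=3j,\,3j+1,\,3j+2$; substituting $n=m+4$ then recovers the three cases in the statement.

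Next I put $\Xi(x):=xF(x)^3$ into the canonical form \eqref{xinF(x)}. By \eqref{Lemm-F-F-L} with $k=3$ one has $x^6F^6-L_3F^3+1=0$, where $L_3(x)=2x^3-3x+1$; writing $F^3=\Xi/x$ and clearing denominators yields $x^5\Xi^2-L_3\Xi+x=0$, that is $\Xi=\dfrac{x}{(1-3x+2x^3)-x^5\Xi}$. This matches \eqref{xinF(x)} with $d=1$, $u(x)=1-3x+2x^3$, $k=5$, $v(x)=-1$, and $u(0)=1$. Since $u(0)=1$ and $k\geq 2$, Proposition \ref{xinu(0,i,ii)}(iii) applies with $u_L(x)=1-3x$ and $u_H(x)=2$, and it gives the order reduction $\det\mathcal{H}_{m-2}(\tau(\Xi))=(-1)^{\binom{2}{2}}\det\mathcal{H}_m(\Xi)$, i.e. $\det\mathcal{H}_m(\Xi)=-\det\mathcal{H}_{m-2}(\tau(\Xi))$, which lowers the order by $2$ at the cost of a sign.

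I then iterate $\tau$: each image is rewritten in the form \eqref{xinF(x)} (its nonzero constant term triggers a case (i) rescaling contributing a factor $u(0)^m$), and the process continues while bookkeeping the accumulated signs $(-1)^{\binom{d+1}{2}}$ and scalings. The structural claim to establish is that this iteration is \emph{shifted periodic} with period $3$: after finitely many steps the transform returns to a scalar multiple of $\Xi$, which converts the determinant relations into a period-$3$ recurrence for $\det\mathcal{H}_m(\Xi)$. Solving that recurrence produces a sequence that is constantly $0$ on one residue class and arithmetic on the other two; the three free constants are pinned down by the base values $\det\mathcal{H}_0(\Xi)=1$, $\det\mathcal{H}_1(\Xi)=0$, $\det\mathcal{H}_2(\Xi)=-1$, read off from the expansion $\Xi=x+3x^2+9x^3+\cdots$.

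The main obstacle is precisely this shifted periodicity. The linear, rather than bounded, growth of the determinants is the hallmark of a shifted periodic continued fraction in the sense of \cite{Wang-Xin-Zhai}, and the difficulty is to prove it rather than merely to observe it numerically: the successive $\tau$-images have rational, non-monomial numerators and carry $u(0)^m$ factors, so one must show that these recombine, after one full period, into an integer-valued period-$3$ recurrence with exactly the claimed slope. Verifying that the cycle closes and that the accumulated factors telescope correctly is where the real work lies; once this is done, the formula for $D_{3,-1}$, and hence for $D_{3,-3}$, follows.
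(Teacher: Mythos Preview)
Your reduction of $D_{3,-3}$ to $D_{3,-1}$ via Corollary~\ref{Cor-Motzkin} is correct and is a genuine simplification over the paper's proof, which works directly with $G(x)=x^{3}F(x)^{3}$ and must therefore handle the first four vanishing values inside the Sulanke--Xin iteration itself. Your functional equation $\Xi=\dfrac{x}{(1-3x+2x^{3})-x^{5}\Xi}$ and the first application of Proposition~\ref{xinu(0,i,ii)}(iii) are also correct.

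The gap is that you stop exactly where the actual proof begins. You write that ``verifying that the cycle closes \ldots\ is where the real work lies,'' and indeed the paper's proof consists almost entirely of that verification: one discovers a one-parameter family $G_{1}^{(p)}$ of quadratic equations such that two further $\tau$-steps send $G_{1}^{(p)}$ to $G_{1}^{(p+1)}$ with an explicit $p$-dependent multiplier, and then telescopes the product of these multipliers. Your description of the endgame is moreover not quite right: the iteration does \emph{not} return to a scalar multiple of $\Xi$. If it did, the resulting recurrence $\det\mathcal H_{m}(\Xi)=c\cdot\det\mathcal H_{m-3}(\Xi)$ with constant $c$ would force geometric, not arithmetic, growth. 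Linear growth is precisely the signature that the orbit lands in a \emph{parametrised} family with a shifting parameter $p$, so that the multipliers $\bigl(\tfrac{p}{p+1}\bigr)^{\bullet}\bigl(\tfrac{p+2}{p+1}\bigr)^{\bullet}$ depend on $p$ and telescope to a rational function of $n$. To complete your argument you must exhibit that family for $\Xi$ (or observe that after one $\tau$-step your $\tau(\Xi)$ coincides, up to normalisation, with the paper's $G_{1}$), write down the explicit two-step recursion, and carry out the telescoping.
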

\begin{proof}
Let $G(x)=x^3F(x)^3$. We can easily obtain 
$D_{3,-3}(n)=\det\mathcal{H}(x^3F(x)^3)=\det\mathcal{H}(G(x))$.
We have the functional equation
\begin{align*}
G(x)=-{\frac {{x}^{3}}{G(x){x}^{3}-2\,{x}^{3}+3\,x-1}}.
\end{align*}
We apply Sulanke-Xin's continued fractions method to $G_0:=G(x)$ by repeatedly using the transformation $\tau$.
This results in a shifted periodic continued fraction of order 2:
\begin{align*}
G_0(x)\mathop{\longrightarrow}\limits^\tau G_1(x)\mathop{\longrightarrow}\limits^\tau G_{2}(x)\mathop{\longrightarrow}\limits^\tau G^{(1)}_{1}(x)\mathop{\longrightarrow}\limits^\tau G^{(1)}_{2}(x)\mathop{\longrightarrow}\limits^\tau G^{(1)}_{3}(x)= G_1^{(2)}(x)\cdots.
\end{align*}
We obtain
\begin{align}\label{e-3G0}
\det \mathcal{H}_k(G_0(x))=\det \mathcal{H}_{k-4}(G_1(x))=-\det \mathcal{H}_{k-6}(G_2(x))=-(-2)^{k-6}\det \mathcal{H}_{k-7}(G_1^{(1)}(x)).
\end{align}
For $p\geq 1$, computer experiments suggest us to define
\begin{align*}
  G^{(p)}_{1}(x) =&-{\frac {p^2x}{(x^5+3p(p+1)x^3-p(p+1)x^2)G^{(p)}_{1}(x)+2px^3-3p(p+1)x+p(p+1)}}.
\end{align*}
Then the results can be summarized as follows:
\begin{align*}
& \det \mathcal{H}_{k-1}\left(G_{1}^{(p)}(x)\right)=-\left(-\frac{p}{p+1}\right)^{k-1} \det \mathcal{H}_{k-3}\left(G_{2}^{(p)}(x)\right)
\\& \det \mathcal{H}_{k-3}\left(G_{2}^{(p)}(x)\right)=\left(-\frac{p+2}{p+1}\right)^{k-3} \det \mathcal{H}_{k-4}\left(G_{1}^{(p+1)}(x)\right).
\end{align*}
Combining the above formulas gives the recursion
\[\det\mathcal{H}_{k-1}\left(G_{1}^{(p)}(x)\right)=-\left(\frac{p}{p+1}\right)^{k-1}\left(\frac{p+2}{p+1}\right)^{k-3} \det \mathcal{H}_{k-4}\left(G_{1}^{(p+1)}(x)\right).\]
Let $k-1=3n+j$, where $0\leq j<3$. We then deduce that
\begin{align}\label{e-3G3n0}
\det \mathcal{H}_{3 n+j}\left(G_{1}^{(1)}(x)\right)=(-1)^{n}\left(\frac12\right)^{3n+j+1}\frac{(n+2)^{j+1}}{(n+1)^{j}} \det \mathcal{H}_{j}\left(G_{1}^{(n+1)}(x)\right).
\end{align}
Then combine \eqref{e-3G0} and \eqref{e-3G3n0}. We have
\begin{align}\label{e-3G3n}
\det \mathcal{H}_{3 n+j+7}(G_0(x))=\det \mathcal{H}_{3n+j+3}(G_1(x))=(-1)^{j+2}\frac{(n+2)^{j+1}}{(n+1)^{j}} \det \mathcal{H}_{j}\left(G_{1}^{(n+1)}(x)\right).
\end{align}
The initial values are
\begin{align*}
\det \mathcal{H}_{0}\left(G_{1}^{(n+1)}(x)\right)= 1; \quad \det \mathcal{H}_{1}\left(G_{1}^{(n+1)}(x)\right)=0 ; \quad \det \mathcal{H}_{2}\left(G_{1}^{(n+1)}(x)\right)=-\left(\frac{n+1}{n+2}\right)^{2}.
\end{align*}
Using the above initial values, \eqref{e-3G0}, and \eqref{e-3G3n}, we obtain
\begin{align*}
\det \mathcal{H}_{0}(G(x))&=1,\ \  \det \mathcal{H}_{1}(G(x))=\det \mathcal{H}_{2}(G(x))=\det \mathcal{H}_{3}(G(x))=0,
\end{align*}
and for $n \geq 1$,
\begin{align*}
\det \mathcal{H}_{3n+1}(G(x))&=-\det \mathcal{H}_{3n+3}(G(x))=n,\ \ \ \det \mathcal{H}_{3n+2}(G(x))=0.
\end{align*}
This completes the proof.
\end{proof}

We can also obtain the following results.
\begin{prop}\label{Prop-Mozt-123}
Let $F(x)$ be the generating function of the Motzkin numbers. Then we have 
\[ D_{2,-1}(n)=\begin{cases}
1 &\ \text{if}\ \ n\equiv 0 \mod 4; \\
0 &\ \text{if}\ \ n\equiv 1,3 \mod 4; \\
-1 &\ \text{if}\ \ n\equiv 2 \mod 4. \\
\end{cases}\]
and
\[ D_{2,-2}(n)=\begin{cases}
1 &\ \text{if}\ \ n\equiv 0,9,10,11 \mod 12; \\
0 &\ \text{if}\ \ n\equiv 1,2,7,8 \mod 12; \\
-1 &\ \text{if}\ \ n\equiv 3,4,5,6 \mod 12. \\
\end{cases}\]
For $n\geq 4$, we get
\[ D_{2,-3}(n)=\begin{cases}
1 &\ \text{if}\ \ n\equiv 0,4 \mod 12; \\
0 &\ \text{if}\ \ n\equiv 2,8 \mod 12; \\
-1 &\ \text{if}\ \ n\equiv 6,10 \mod 12; \\
8k &\ \text{if}\ \ n=12k+1; \\
-8k &\ \text{if}\ \ n=12k+3; \\
8k+2 &\ \text{if}\ \ n=12k+5; \\
-8k-4 &\ \text{if}\ \ n=12k+7; \\
8k+4 &\ \text{if}\ \ n=12k+9; \\
-8k-6 &\ \text{if}\ \ n=12k+11.
\end{cases}\]
\end{prop}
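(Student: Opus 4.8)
The plan is to treat all three determinants uniformly as Hankel determinants of the shifted convolution $G(x) = x^c F(x)^2$, following the template established in the proof of Proposition \ref{Prop-D33-proof}. Since $x^c F(x)^2 = \sum_{\ell} a_{2,\ell-c}x^\ell$, one has $D_{2,-c}(n) = \det \mathcal{H}_n(x^c F(x)^2)$ for $c = 1,2,3$. To apply Sulanke-Xin's machinery I first need a quadratic functional equation for $G(x)$. Setting $k=2$ in \eqref{Lemm-F-F-L} and using $u(x) = x-1$, $v(x) = x^2$, $w(x)=1$ for the Motzkin series gives $L_2(x) = 1 - 2x - x^2$ and hence $x^4 F(x)^4 + 1 = (1-2x-x^2)F(x)^2$. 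Writing $H = F^2$ and $G = x^c H$, a direct substitution yields
\[ G(x) = \frac{x^c}{(1 - 2x - x^2) - x^{4-c}G(x)}, \]
which is exactly the normal form \eqref{xinF(x)} with $d = c$, $u(x) = 1 - 2x - x^2$, $k = 4-c$, and $v(x) = -1$; this is valid for each $c \in \{1,2,3\}$ since then $k = 4-c \geq 1$.

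With the functional equation in hand, I would iterate the transformation $\tau$ of Proposition \ref{xinu(0,i,ii)}, exactly as in the $D_{3,-3}$ computation. Because $u(0) = 1$, the relevant branches are ii) (when $c = 3$, so $k = 1$) and iii) (when $c = 1,2$, so $k \geq 2$), each contributing a sign $(-1)^{\binom{d+1}{2}}$ and an index shift of $d+1$. Computer experimentation, as in Proposition \ref{Prop-D33-proof}, should reveal that after finitely many steps the sequence of transformed series enters a shifted periodic continued fraction: one isolates a one-parameter family $G^{(p)}(x)$, $p \geq 1$, closing up under a fixed number of $\tau$-steps, and records the accumulated multiplicative factor relating $\det \mathcal{H}_{\bullet}(G^{(p)})$ to $\det \mathcal{H}_{\bullet}(G^{(p+1)})$. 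Telescoping this relation and feeding in the initial values of the small determinants $\det \mathcal{H}_0$, $\det \mathcal{H}_1$, $\det \mathcal{H}_2$ of each $G^{(p)}$ then produces closed forms.

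Solving the resulting recursion yields the three stated formulas. For $D_{2,-1}(n)$ the continued fraction should be purely periodic of period $4$, giving the clean cycle $1,0,-1,0$; for $D_{2,-2}(n)$ the period grows to $12$ but the determinant values still lie in $\{-1,0,1\}$; and for $D_{2,-3}(n)$ the family $G^{(p)}$ carries a $p$-dependent constant, which after telescoping produces the linearly growing entries $8k, -8k, 8k+2, \dots$ interleaved with the purely periodic residues mod $12$. The separation into residue classes mod $4$ and mod $12$ is dictated precisely by the period lengths of the respective continued fractions.

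The main obstacle, as for $D_{3,-3}$, is twofold: correctly guessing the recurring family $G^{(p)}(x)$ together with its period, and then rigorously verifying the single-step determinant relations of Proposition \ref{xinu(0,i,ii)} with all accumulated powers and signs tracked exactly. The case $D_{2,-3}(n)$ is the most delicate, since the presence of growing terms signals a genuinely shifted (rather than ultimately periodic) continued fraction, so the $p$-dependence of the transfer constant must be pinned down before the telescoping can be carried out; verifying the initial values in each residue class and confirming that the eventual formula holds only for $n \geq 4$ is the final bookkeeping step.
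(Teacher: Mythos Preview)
Your proposal is correct and follows exactly the approach the paper intends: the paper explicitly states that the proof of Proposition~\ref{Prop-Mozt-123} ``is analogous to Proposition~\ref{Prop-D33-proof}, and thus we omit it here,'' and your outline---writing $D_{2,-c}(n)=\det\mathcal{H}_n(x^cF(x)^2)$, deriving the quadratic functional equation $G(x)=x^c/((1-2x-x^2)-x^{4-c}G(x))$ from \eqref{Lemm-F-F-L}, and then iterating Sulanke--Xin's $\tau$ to uncover a shifted periodic continued fraction---matches that template precisely. Your derivation of the functional equation and identification of the relevant branches ii)/iii) of Proposition~\ref{xinu(0,i,ii)} are accurate.
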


The proof of Proposition \ref{Prop-Mozt-123} is analogous to Proposition \ref{Prop-D33-proof}, and thus we omit it here.
Similarly, when $F(x)$ are the generating functions of Motzkin numbers, large Schr\"oder numbers, etc., we can study the exact formula for $D_{K,M}(n)$ for concrete $K\in \mathbb{N}$ and $M\in \mathbb{Z}$ by using Sulanke-Xin's quadratic transformation.
We will not list them one by one here, because they can be performed in an entirely similar manner as in \cite{Wang-Xin,Wang-Zhang}.

\section{Concluding Remark}

Fulmek \cite{Fulmek-2402} gave a bijective proof of Cigler's conjecture by interpreting determinants as generating functions of nonintersecting lattice paths. Inspired by this work, we raise the following question.
\begin{Openprob}
Let $F(x)$ be the generating function of the Motzkin numbers. Find a bijective proof for Corollary \ref{Cor-Motzkin}.
\end{Openprob}






\noindent
{\small \textbf{Acknowledgements:} 
The authors would like to thank the anonymous referee for valuable suggestions for improving the presentation. The authors are also greatly indebted to Professor Guoce Xin for the guidance over the past years.
This work was partially supported by the Natural Science Foundation of Henan Province (Grant No. [252300420912]).

\end{document}